\documentclass{amsart}
\usepackage{amsmath,amsfonts,amssymb,enumerate,mathrsfs,mathtools,amscd}

\newcommand{\N}{\mathbb{N}}                   
\newcommand{\Z}{\mathbb{Z}}                   
\newcommand{\R}{\mathbb{R}}                   
\newcommand{\B}{\mathbb{B}}                   
\newcommand{\AR}{\mathscr{A{\!}R}}            

\newcommand{\Reg}{\mathrm{Reg}}
\newcommand{\Sing}{\mathrm{Sing}}
\newcommand{\Int}{\mathrm{Int}}

\newcommand{\vp}{\varphi}

\newcommand{\An}{\mathscr{A}}                 
\newcommand\rk{\mathrm{rk}}                   

\theoremstyle{plain}
\newtheorem{theorem}{Theorem}[section]
\newtheorem{proposition}[theorem]{Proposition}
\newtheorem{lemma}[theorem]{Lemma}
\newtheorem{corollary}[theorem]{Corollary}

\theoremstyle{definition}

\newtheorem{remark}[theorem]{Remark}

\numberwithin{equation}{section}

\begin{document}

\title[Globally subanalytic arc-symmetric sets]{Globally subanalytic arc-symmetric sets}

\author{Janusz Adamus}
\address{Department of Mathematics, The University of Western Ontario, London, Ontario, Canada N6A 5B7}
\email{jadamus@uwo.ca}
\thanks{J. Adamus's research was partially supported by the Natural Sciences and Engineering Research Council of Canada}

\subjclass[2010]{32B20, 14P15, 14E15, 32C40}
\keywords{subanalytic geometry, globally subanalytic set, arc-symmetric set, Nash subanalytic set, semianalytic set, arc-analytic function}

\begin{abstract}
It is shown that every C-semianalytic arc-symmetric set can be realized as the zero locus of an arc-analytic function. As a consequence, a Nash globally subanalytic arc-symmetric set is the zero locus of a continuous globally-subanalytic function which is arc-analytic outside a simple normal crossings divisor.
\end{abstract}
\maketitle


\section{Introduction}
\label{sec:intro}

The purpose of this article is to initiate a systematic study of a certain important class of subanalytic sets, which are closed under analytic continuation.
Let us begin by recalling some basic notions.
A set $X\subset\R^n$ is called \emph{semianalytic}, when every point $x\in\R^n$ has an open neighbourhood $U$ such that $X\cap U$ is a finite union of sets of the form
\[
\{y\in U\;:\;f(y)=0, g_1(y)>0,\dots,g_k(y)>0\}\,,
\]
where $f,g_1,\dots,g_k\in\An(U)$ are real analytic functions on $U$. A set $Y\subset\R^n$ is called \emph{subanalytic}, when for every point $x\in\R^n$ there are an open neighbourhood $U$ and a bounded semianalytic set $X\subset\R^{n+m}$, for some $m$, such that $Y\cap U=\pi(X)$, where $\pi:\R^{n+m}\to\R^n$ is the coordinate projection.

For any $n\in\Z_+$, let $v_n:\R^n\to\R^n$ denote the semialgebraic map
\[
(x_1,\dots,x_n)\ \mapsto\ \left(\frac{x_1}{\sqrt{1+x_1^2}},\dots,\frac{x_n}{\sqrt{1+x_n^2}}\right)\,.
\]
We say that a set $E\subset\R^n$ is \emph{globally subanalytic} if its image $v_n(E)$ is subanalytic in $\R^n$. Since $v_n$ is an analytic isomorphism onto the bounded open set $(-1,1)^n$, it follows that globally subanalytic sets are subanalytic. The importance of the class of globally subanalytic sets stems from the fact that they form an o-minimal structure $(\mathcal{S}_n)_{n\in\N}$ (see \cite{vDDM}). This means, by definition, that for every $n\in\N$, (i) $\mathcal{S}_n$ is a boolean algebra of subsets of $\R^n$, (ii) $X\in\mathcal{S}_n$ implies $X\times\R,\R\times X\in\mathcal{S}_{n+1}$, (iii) $\{(x_1,\dots,x_n):x_1=x_n\}\in\mathcal{S}_n$, (iv) $X\in\mathcal{S}_{n+1}$ implies $\pi(X)\in\mathcal{S}_n$, where $\pi:\R^{n+1}\to\R^n$ is the coordinate projection, (v) the set $\{(x,y)\in\R^2:x<y\}$ is in $\mathcal{S}_2$, and (vi) the only elements of $\mathcal{S}_1$ are the finite unions of open intervals and singletons.
O-minimality is responsible for several finiteness properties that we use throughout the paper, such as existence of finite stratifications. For details on o-minimal structures, see \cite{vDD}.

Finally, a set $E\subset\R^n$ is called \emph{arc-symmetric}, when for every real analytic arc $\gamma:(-1,1)\to\R^n$ with $\Int(\gamma^{-1}(E))\neq\varnothing$, one has $\gamma((-1,1))\subset E$. (Here and throughout, $\Int(S)$ (or $\Int_\Gamma(S)$) denotes the interior of a set $S$ (as a subset of $\Gamma$).) A function $f:\Omega\to\R$ on a real analytic manifold $\Omega$ is called \emph{arc-analytic}, when $f\circ\gamma$ is analytic for every real analytic arc $\gamma:(-1,1)\to\Omega$.
\medskip

Throughout this paper we shall denote by $\AR(\R^n)$ the class of globally subanalytic arc-symmetric subsets of $\R^n$. This class includes, in particular, the arc-symmetric semialgebraic subsets of $\R^n$ (introduced by Kurdyka \cite{Ku}). The geometry of semialgebraic arc-symmetric sets is well understood by now. In fact, it turns out that they are precisely the zero loci of semialgebraic arc-analytic functions. The elegant theory of semialgebraic arc-symmetric sets and arc-analytic functions provides a natural real counterpart of algebraic geometry over an algebraically closed field (see \cite{Ku, BM2, KP, AS1, AS2, KK} and the references therein).

Much less is known in the subanalytic setting. The main goal of this note is to show that certain fundamental analytic and geometric properties of the semialgebraic arc-symmetric sets can be generalized to our $\AR(\R^n)$. We begin by restricting to the class of C-semianalytic sets, introduced by Acquistapace, Broglia and Fernando \cite{ABF} (see Section~\ref{sec:ARC-sets}). Thanks to Theorem~\ref{thm:Nash-is-semi}, these can be viewed as a local model for a more general class, namely the Nash subanalytic arc-symmetric sets, studied in the final Section~\ref{sec:Nash-glob-sub}. Our main result, Theorem~\ref{thm:ARC-zero-set} asserts that locally a Nash subanalytic arc-symmetric set is the zero locus of a subanalytic arc-analytic function. As a consequence, globally such a set can be realized as the zero locus of a continuous globally subanalytic function, which is arc-analytic outside a simple normal crossings divisor (Theorem~\ref{thm:AR-zero-set}).


\section{$\AR$ topology}
\label{sec:AR-topology}

Let $\Omega$ be a connected real analytic submanifold of $\R^n$. We shall denote by $\AR(\Omega)$ the family of arc-symmetric subsets of $\Omega$ that are globally subanalytic as subsets of $\R^n$.

\begin{remark}
\label{rem:the-obvious}
{~}
\begin{enumerate}
\item Every $E\in\AR(\Omega)$ is a closed set in the Euclidean topology on $\Omega$. This follows from the subanalytic Curve Selection Lemma (see, e.g., \cite[1.17]{vDDM}).
\item $\AR(\R^n)$ contains all arc-symmetric semialgebraic subsets of $\R^n$.
\item $\AR(\Omega)$ contains globally subanalytic real analytic subsets of $\Omega$. Indeed, real analytic sets are arc-symmetric.
\end{enumerate}
\end{remark}

Kurdyka's fundamental \cite[Thm.\,1.4]{Ku} generalizes naturally to the globally subanalytic setting.

\begin{theorem}
\label{thm:AR-topology}
Let $\Omega$ be a connected, globally subanalytic, real analytic submanifold of $\R^n$.
There exists a noetherian topology on $\Omega$, whose closed sets are precisely the elements of $\AR(\Omega)$.
\end{theorem}

The theorem follows from the two lemmas below. We shall call the above noetherian topology the \emph{$\AR$-topology} on $\Omega$. The elements of $\AR(\Omega)$ will henceforth be called $\AR$-closed sets.

\begin{lemma}
\label{lem:dim-drop}
Let $\Gamma$ be a connected, smooth, subanalytic subset of $\Omega$, and let $E\subset\Omega$ be subanalytic and arc-symmetric. Then
\[
\Gamma\not\subset E\ \Longrightarrow\ \dim(\Gamma\cap E)<\dim\Gamma\,.
\]
\end{lemma}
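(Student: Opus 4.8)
The plan is to argue by contradiction: suppose $\Gamma \not\subset E$ but $\dim(\Gamma \cap E) = \dim \Gamma =: k$. Since $\Gamma$ is smooth and connected of dimension $k$, it is in particular a $k$-dimensional real analytic manifold, and $\Gamma \cap E$ is a subanalytic subset of $\Gamma$ of full dimension $k$. A subanalytic set of full dimension in a manifold has nonempty interior (in the manifold), so there is a point $x_0 \in \Gamma$ and an open neighbourhood $\Omega$ of $x_0$ in $\Gamma$ with $\Omega \subset E$. On the other hand, since $\Gamma \not\subset E$, there is a point $x_1 \in \Gamma \setminus E$.

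Next I would use connectedness of $\Gamma$ together with a curve-connecting argument. Because $\Gamma$ is a connected subanalytic manifold, it is (subanalytically) path-connected, and in fact any two of its points can be joined by a real analytic arc inside $\Gamma$ — more precisely, I want an analytic arc $\gamma:(-1,1)\to\Gamma$ whose image meets the open set $\Omega$ (so that $\gamma^{-1}(E)$ contains a nonempty open interval) but also contains the point $x_1 \notin E$. One clean way: take a subanalytic (piecewise analytic) path in $\Gamma$ from a point of $\Omega$ to $x_1$; after reparametrizing and using that $\Gamma$ is a manifold, one can perturb/replace it by a single real analytic arc $\gamma:(-1,1)\to\Gamma\subset\R^n$ with $\gamma(t_0)\in\Omega$ for some $t_0$ and $\gamma(t_1)=x_1$ for some $t_1$. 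Then $\gamma^{-1}(E) \supset \gamma^{-1}(\Omega)$ contains an open interval around $t_0$, so $\Int(\gamma^{-1}(E)) \neq \varnothing$; arc-symmetry of $E$ forces $\gamma((-1,1)) \subset E$, hence $x_1 \in E$, a contradiction.

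The main obstacle is the middle step: producing an honest \emph{real analytic} arc $\gamma:(-1,1)\to\Gamma$ (not merely piecewise analytic, and landing inside $\Gamma$, not just $\R^n$) that joins a point of the open set $\Omega\subset E$ to the chosen point $x_1\in\Gamma\setminus E$. The natural tools are: connectedness plus local analytic coordinates on the manifold $\Gamma$ to chain together short analytic arcs; an analytic approximation/smoothing to merge a finite chain of analytic arcs into one; and possibly the subanalytic Curve Selection Lemma applied on $\Gamma$. One must be careful that $\Gamma$ need not be closed or a submanifold of a nice global model, so the arc must be constructed intrinsically and then composed with the inclusion $\Gamma\hookrightarrow\R^n$. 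Alternatively, one can avoid constructing a global arc by noting it suffices to reach the topological frontier in $\Gamma$ of the open set $\{x\in\Gamma: x \text{ has a neighbourhood in }\Gamma\text{ contained in }E\}$: if this open set is not all of $\Gamma$, pick $x_1$ on its frontier, choose a short analytic arc through $x_1$ that enters the open set, and apply arc-symmetry to conclude $x_1$ lies in the set's interior — contradicting that it was a frontier point. This localizes the whole argument and makes the analytic-arc construction trivial (it is just a coordinate line segment through $x_1$ in a chart of $\Gamma$).

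Once the contradiction is in place the lemma follows immediately, and I would note that only smoothness and connectedness of $\Gamma$, subanalyticity of $\Gamma$ and $E$, and arc-symmetry of $E$ are used — no global subanalyticity is needed here, consistent with its intended use in Corollary~\ref{cor:global-smooth}.
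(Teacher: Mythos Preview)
Your ``alternative'' localization is exactly the paper's argument: show that $\Int_\Gamma(\Gamma\cap E)$ is closed in $\Gamma$ by working in a chart around a boundary point and using straight-line analytic arcs there, then conclude by connectedness. One small tightening: a single arc through the frontier point $x_1$ only gives $x_1\in E$, not $x_1\in\Int_\Gamma(\Gamma\cap E)$; to get the contradiction you must, as the paper does, fix a point $b$ of $\Int_\Gamma(\Gamma\cap E)$ inside the chart and run an analytic arc from $b$ to \emph{every} point $x$ of the chart (each such arc meets $E$ in an open set near $b$, so arc-symmetry puts $x\in E$), whence the whole chart lies in $E$ and $x_1$ is interior after all. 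Your first, global-arc route would also work in principle, but you correctly flag the obstacle of producing a single honest analytic arc inside $\Gamma$ between far-apart points; the paper avoids this entirely by the local argument.
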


The proof of Lemma~\ref{lem:dim-drop} is identical to that of \cite[1.6]{Ku}, as it only relies on basic topological properties of o-minimal structures. We include it for the reader's convenience.

\begin{proof}
Suppose that $\dim\Gamma\cap E=\dim\Gamma=k$. Then, $\Int_\Gamma(\Gamma\cap E)\neq\varnothing$, so one can pick a point $a\in\overline{\Int_\Gamma(\Gamma\cap E)}$. Let then $U$ be an open chart around $a$ in $\Gamma$ and let $\vp:U\to\B^k$ be an analytic isomorphism onto the open unit ball in $\R^k$ such that $\vp(a)=0$. We have $\vp(\Int_\Gamma(\Gamma\cap E))\cap\B^k\neq\varnothing$, and hence can pick a $b\in\vp(\Int_\Gamma(\Gamma\cap E))\cap\B^k$. Let now $x\in\B^k$ be arbitrary and let $\widetilde\gamma:[-1,1]\to\B^k$ be an analytic arc with $\widetilde\gamma(-1)=b$, $\widetilde\gamma(1)=x$. Set $\gamma\coloneqq\vp^{-1}\circ\widetilde\gamma$. Then, $\Int(\gamma^{-1}(E))\neq\varnothing$, and hence by arc-symmetry of $E$, $\gamma^{-1}(E)=[-1,1]$. In particular, $\vp^{-1}(x)\in E$. Since $x$ was arbitrary, we have $U\subset\Int_\Gamma(\Gamma\cap E)$, and so $a\in\Int_\Gamma(\Gamma\cap E)$. Since $a$ was arbitrary, this proves $\overline{\Int_\Gamma(\Gamma\cap E)}=\Int_\Gamma(\Gamma\cap E)$, and thus $\Gamma\cap E=\Gamma$, by connectedness of $\Gamma$.
\end{proof}

\begin{lemma}
\label{lem:fin-intersection}
Let $\Gamma$ be a globally subanalytic, smooth, connected subset of $\Omega$, and let $\{E_i\}_{i\in I}\subset\AR(\Omega)$. Then, there exist $i_1,\dots,i_s\in I$ such that
\[
\Gamma\cap\bigcap_{i\in I}E_i\ =\ \Gamma\cap E_{i_1}\cap\dots\cap E_{i_s}\,.
\]
\end{lemma}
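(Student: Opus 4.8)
The plan is to reduce the claim to a descending chain condition via the dimension-drop Lemma~\ref{lem:dim-drop}. First I would consider the family of finite intersections $E_{i_1}\cap\dots\cap E_{i_s}$ with $E_{i_j}\in\{E_i\}_{i\in I}$, and among the sets of the form $\Gamma\cap E_{i_1}\cap\dots\cap E_{i_s}$ pick one, call it $A=\Gamma\cap E_{i_1}\cap\dots\cap E_{i_s}$, of minimal dimension; this is possible since dimensions are nonnegative integers bounded by $\dim\Gamma$. The goal is then to show $A=\Gamma\cap\bigcap_{i\in I}E_i$, for which the nontrivial inclusion is $A\subset E_i$ for every $i\in I$.

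The key observation is that the minimal-dimension set $A$ need not itself be smooth or connected, so one cannot apply Lemma~\ref{lem:dim-drop} directly to $\Gamma$. Instead I would work connected-component-wise on the top-dimensional smooth part. Let $k=\dim A$. Since $\Gamma$ is globally subanalytic (hence $A=\Gamma\cap E_{i_1}\cap\dots\cap E_{i_s}$ is globally subanalytic, as $\AR(\R^n)$ is stable under finite intersections and intersection with a globally subanalytic set stays in the o-minimal structure), the smooth locus $\Reg_k(A)$ of points where $A$ is a $k$-dimensional submanifold is again globally subanalytic, open and dense in the $k$-dimensional part of $A$, and has finitely many connected components $\Gamma_1,\dots,\Gamma_t$ by o-minimality. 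Now fix $i\in I$ and suppose for contradiction that $A\not\subset E_i$. Then $A\cap E_i$ is a proper subset of some component, i.e.\ there is a component $\Gamma_m$ with $\Gamma_m\not\subset E_i$ — otherwise every $\Gamma_m\subset E_i$, hence $\overline{\Reg_k(A)}\subset E_i$ (using that $E_i$ is closed by Remark~\ref{rem:the-obvious}(1)), which together with the lower-dimensional remainder $A\setminus\overline{\Reg_k(A)}$ being handled by descending induction on $k$ would give $A\subset E_i$, the desired contradiction.

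So I would set up an induction on $k=\dim A$. For the inductive step, having chosen $\Gamma_m\not\subset E_i$, Lemma~\ref{lem:dim-drop} applied to the connected smooth subanalytic set $\Gamma_m$ and the subanalytic arc-symmetric set $E_i$ yields $\dim(\Gamma_m\cap E_i)<k$. Then $A\cap E_i = \Gamma\cap E_{i_1}\cap\dots\cap E_{i_s}\cap E_i$ is a finite intersection of the same type; I claim its dimension is $<k$. On $\Gamma_m$ this is immediate; on the other components $\Gamma_{m'}$ the intersection with $E_i$ has dimension $\le k$, but to push it below $k$ I would replace the single index $i$ by finitely many extra indices, one handling each component $\Gamma_{m'}$ that is not contained in $E_i$ — for each such $m'$ either $\Gamma_{m'}\not\subset E_i$ already and Lemma~\ref{lem:dim-drop} drops it, or $\Gamma_{m'}\subset E_i$ and it contributes nothing. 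Together with the inductive hypothesis applied to the lower-dimensional piece $A\setminus\overline{\bigcup\Gamma_m}$, this produces finitely many new indices $j_1,\dots,j_r$ with $\dim(\Gamma\cap E_{i_1}\cap\dots\cap E_{i_s}\cap E_{j_1}\cap\dots\cap E_{j_r})<k=\dim A$, contradicting the minimality of $\dim A$. Hence $A\subset E_i$ for all $i$, which is the assertion.

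The main obstacle, and the part requiring the most care, is the bookkeeping in the inductive step: $A$ of minimal dimension is a finite intersection but is not assumed connected or smooth, so the dimension drop has to be extracted component by component from its regular locus and then from the strictly lower-dimensional singular/boundary remainder, and at each stage one must verify that only finitely many new indices are adjoined (which is where o-minimality — finiteness of the number of connected components of a globally subanalytic set — is essential) so that the final intersection is still a \emph{finite} one. Once this finiteness is in hand, the contradiction with minimality of $\dim A$ is immediate and the proof closes.
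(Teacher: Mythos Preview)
Your argument has a genuine gap. The strategy ``choose $A=\Gamma\cap E_{i_1}\cap\dots\cap E_{i_s}$ of minimal dimension $k$, then show $A\subset E_i$ for all $i$ by otherwise producing a finite intersection of dimension $<k$'' cannot succeed: by definition of $k$, no finite intersection has dimension $<k$. Concretely, take $\Gamma=\R^2$ and, for $n\geq1$, let $E_n=\{y=0\}\cup\{(n,1)\}$; each $E_n$ is $\AR$-closed. Every finite intersection $\Gamma\cap E_{n_1}\cap\dots\cap E_{n_s}$ contains the line $\{y=0\}$, so the minimal dimension is $k=1$. Choosing $A=\Gamma\cap E_1=\{y=0\}\cup\{(1,1)\}$, one has $A\not\subset E_2$, yet no enlargement of the index set drops the dimension below $1$. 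Your sentence ``$\Gamma_{m'}\subset E_i$ and it contributes nothing'' is exactly where the argument breaks: such a component contributes nothing to the \emph{failure} $A\not\subset E_i$, but it contributes a full $k$-dimensional piece to $A\cap E_i$, so the dimension does not drop. The obstruction to $A\subset E_i$ lives in the lower-dimensional part of $A$, and minimality of $\dim A$ gives you no leverage there.

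The paper's proof sidesteps this by inducting on $\dim\Gamma$ rather than on the dimension of a chosen finite intersection. One picks any $i_0$ with $\Gamma\not\subset E_{i_0}$; Lemma~\ref{lem:dim-drop} gives $\dim(\Gamma\cap E_{i_0})<\dim\Gamma$, and by o-minimality $\Gamma\cap E_{i_0}$ decomposes into finitely many smooth connected globally subanalytic pieces $\Gamma_1,\dots,\Gamma_s$, to each of which the inductive hypothesis applies directly. Your decomposition of $\Reg_k(A)$ and appeal to o-minimality are the right ingredients; they just need to be organised as an induction on the dimension of the ambient smooth connected set, not as a minimisation over finite subintersections.
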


The proof, again, is virtually identical to that of \cite[Lem.\,1.5]{Ku}. We include it for the reader's convenience.

\begin{proof}
We proceed by induction on $k=\dim\Gamma$. If $k=0$, then $\Gamma$ is a singleton and there is nothing to show. Suppose then that $k\geq1$ and the claim holds for all globally subanalytic smooth connected subsets of $\Omega$ of dimensions less than $k$. If $\Gamma\subset E_i$ for all $i\in I$, then again there is nothing to show, so let $i_0\in I$ be such that $\Gamma\cap E_{i_0}\not\supset\Gamma$. By Lemma~\ref{lem:dim-drop}, the globally subanalytic set $\Gamma\cap E_{i_0}$ is then of dimension less than or equal to $k-1$. By o-minimality, $\Gamma\cap E_{i_0}$ is a finite union of connected smooth globally subanalytic sets $\Gamma_1,\dots,\Gamma_s$. By induction, for each $j=1,\dots,s$, there exists a finite index subset $I_j\subset I$ such that $\Gamma_j\cap\bigcap_{i\in I}E_i=\Gamma_j\cap\bigcap_{i\in I_j}E_i$. Then,
\[
\Gamma\cap\bigcap_{i\in I}E_i\ =\ (\Gamma_1\cup\dots\cup\Gamma_s)\cap\bigcap_{i\in I}E_i\ =\ \bigcup_{j=1}^s(\Gamma_j\cap\bigcap_{i\in I_j}E_i)\ =\ 
\Gamma\cap\bigcap_{i\in I_1\cup\dots\cup I_s}E_i\,.
\]
\end{proof}

\begin{proof}[Proof of Theorem~\ref{thm:AR-topology}]
By Lemma~\ref{lem:fin-intersection}, letting $\Gamma=\Omega$, intersection of an arbitrary family of $\AR$-closed sets is an $\AR$-closed set. Clearly, finite unions of arc-symmetric sets are also arc-symmetric. So are the empty set $\varnothing$ and $\Omega$. Noetherianity of the $\AR$-topology follows from Lemma~\ref{lem:fin-intersection} again, since every decreasing sequence of $\AR$-closed sets stabilizes.
\end{proof}

Given a set $E\in\AR(\Omega)$, we will say that $E$ is \emph{$\AR$-irreducible} if $E$ cannot be expressed as a union of two proper $\AR$-closed subsets.
By noetherianity of $\AR$-topology, every $\AR$-closed set $E$ can be uniquely expressed as a finite union of $\AR$-irreducible sets
\[
E=E_1\cup\dots\cup E_s\,,\quad\mathrm{where\ }E_i\not\subset\bigcup_{j\neq i}E_j\mathrm{\ for\ all\ }i=1,\dots,s\,.
\]
The sets $E_1,\dots,E_s$ are called the \emph{$\AR$-irreducible components} of $E$.
By noetherianity of $\AR$-topology, one can also define the \emph{$\AR$-closure} of an arbitrary set $S\subset\Omega$, denoted $\overline{S}^\AR$, as the smallest (with respect to inclusion) $\AR$-closed subset of $\Omega$ that contains $S$.


\section{C-semianalytic arc-symmetric sets}
\label{sec:ARC-sets}

Let $\Omega$ be a real analytic manifold, and let $\Omega^*$ denote its complexification (see, e.g., \cite{GMT} for a modern exposition of complexification of real analytic spaces). A set $R\subset\Omega$ is called \emph{C-analytic}, when there exists an open neighbourhood $V^*$ of $\Omega$ in $\Omega^*$ and a complex analytic set $Z$ in $V^*$ such that $Z\cap\Omega=R$ (see, e.g., \cite{WB}). By \cite[Prop.\,10]{WB} (cf. \cite[Prop.\,15]{Car}), $R$ is a C-analytic subset of $\Omega$ if and only if $R$ can be realized as the common zero locus of finitely many real analytic functions on $\Omega$, and thus $R=f^{-1}(0)$ for some $f\in\An(\Omega)$.

For a set $S\subset\Omega$, its \emph{C-analytic closure} is the smallest C-analytic set in $\Omega$ which contains $S$. It is well defined, as the intersection of any family of C-analytic sets is itself C-analytic (see, e.g., \cite[\S\,8]{WB}).

Following \cite{ABF}, we will say that a subset $E$ of $\Omega$ is \emph{C-semianalytic}, when $E$ is a union of a locally finite family of global basic semianalytic subsets of $\Omega$, that is, sets of the form $\{f=0, g_1>0,\dots, g_s>0\}$, where $f, g_j\in\An(\Omega)$.
\smallskip

Let $\AR_C(\R^n)$ denote the family of C-semianalytic globally subanalytic arc-symmetric sets in $\R^n$. More generally, for a real analytic submanifold $\Omega\subset\R^n$, we shall denote by $\AR_C(\Omega)$ the family of C-semianalytic sets $E\subset\Omega$ such that $E$ is arc-symmetric in $\Omega$ and globally subanalytic as a subset of $\R^n$. 

\begin{theorem}
\label{thm:ARC-topology}
Let $\Omega$ be a connected globally subanalytic real analytic submanifold of $\R^n$.
There exists a noetherian topology on $\Omega$, whose closed sets are precisely the elements of $\AR_C(\Omega)$.
\end{theorem}

\begin{proof}
Let $\Gamma$ be a globally subanalytic smooth connected subset of $\Omega$, and let $\{E_i\}_{i\in I}\subset\AR_C(\Omega)$ be arbitrary. By Lemmas~\ref{lem:dim-drop} and~\ref{lem:fin-intersection},
\[
\Gamma\cap\bigcap_{i\in I}E_i\ =\ \Gamma\cap E_{i_1}\cap\dots\cap E_{i_s}\,,
\]
for some $i_1,\dots,i_s\in I$. By \cite[Prop.\,5.3.5]{ABF}, locally finite unions and intersections of C-semianalytic sets are C-semianalytic.
The claim thus follows as in the proof of Theorem~\ref{thm:AR-topology}.
\end{proof}

Given a set $E\in\AR_C(\Omega)$, we will say that $E$ is \emph{$\AR_C$-irreducible} if $E$ cannot be expressed as a union of two proper $\AR_C$-closed subsets.
By noetherianity of $\AR_C$-topology, every $\AR_C$-closed set $E$ can be uniquely expressed as a finite union of $\AR_C$-irreducible sets
\[
E=E_1\cup\dots\cup E_s\,,\quad\mathrm{where\ }E_i\not\subset\bigcup_{j\neq i}E_j\mathrm{\ for\ all\ }i=1,\dots,s\,.
\]
The sets $E_1,\dots,E_s$ are called the \emph{$\AR_C$-irreducible components} of $E$.
\smallskip

\begin{proposition}
\label{prop:sub-C-analytic}
Let $\Omega$ be a connected real analytic submanifold of $\R^n$.
Let $E\in\AR_C(\Omega)$ and let $R_E\subset\Omega$ denote its C-analytic closure. Then, $\dim{R_E}=\dim{E}$.
\end{proposition}

\begin{proof}
Let $k=\dim{E}$.
Recall that $R_E$ is the intersection of all C-analytic sets $R\subset\Omega$, such that $E\subset R$. Clearly, for every such $R$, $\dim{R}\geq\dim{E}$.
Therefore, it suffices to find a C-analytic $R$ with $R\supset E$ and $\dim{R}=k$.

Assume without loss of generality that $E\neq\varnothing$. As a globally subanalytic set, $E=\Gamma_1\cup\dots\cup\Gamma_s$ is a finite union of connected smooth globally subanalytic sets. Let $x_0\in E$ be arbitrary, and let $B$ be an open ball containing $x_0$, such that $E\cap B=(S_1\cup\dots\cup S_t)\cap B$, where each $S_j$ is a global basic semianalytic set in $\Omega$. Let $R_j$ be the C-analytic closure of $S_j$, $j=1,\dots,t$.
By \cite[Def.\,5.4.1]{ABF} and the following remarks, we have $\dim{R_j}=\dim{S_j}$, and hence there exists $j_1$ such that $\dim_{x_0}{R_{j_1}}=\dim_{x_0}{E}$. Then, $R_{j_1}$ contains a nonempty open subset of a manifold $\Gamma_{i_1}$ of dimension $\dim_{x_0}{E}$ adherent to $x_0$, for some $1\leq i_1\leq s$. Hence, $R_{j_1}\supset\Gamma_{i_1}$, by arc-symmetry (Lemma~\ref{lem:dim-drop}). Since the collection $\Gamma_1,\dots,\Gamma_s$ is finite, it follows that $E$ is contained in the union of finitely many C-analytic sets $R_{j_1},\dots,R_{j_r}$. Set $R=R_{j_1}\cup\dots\cup R_{j_r}$. Then, $R$ is C-analytic and $\dim{R}=\max_j\dim{R_j}=\max_i\dim{\Gamma_i}=\dim{E}$.
\end{proof}

Let now $R\subset\Omega$ be a C-analytic set of dimension $k>0$. By \cite[Prop.\,10]{WB}, $R$ is the zero locus of a coherent sheaf of $\An(\Omega)$-ideals. It thus admits a resolution of singularities $\pi:\widetilde{R}\to R$, where $\widetilde{R}$ is smooth of dimension $k$, and $\pi$ is a composite of a locally finite sequence of blowings-up with smooth centres (see \cite[Thm.\,13.4]{BM3}). Moreover, there is a C-analytic set $S\subset R$, of dimension $\dim{S}<k$, such that $R\setminus S$ is smooth of pure dimension $k$ and $\pi$ is an isomorphism over $R\setminus S$. (Indeed, if $Z$ is a $k$-dimensional complex analytic set in an open neighbourhood $V^*$ of $\Omega$ in $\Omega^*$ such that $Z\cap\Omega=R$ and $Z=Z'\cup Z''$ is its decomposition into analytic sets, where $Z'$ is of pure dimension $k$ and $\dim{Z''}<k$, then one can take $S=(\Sing(Z')\cup Z'')\cap\Omega$.)

\begin{lemma}
\label{lem:image-arc-sym}
Let $\Omega$ be a connected real analytic submanifold of $\R^n$.
Let $R\subset\Omega$ be a C-analytic set of dimension $k>0$, let $\pi:\widetilde{R}\to R$ be its resolution of singularities, and let $S\subset R$ be a C-analytic set of dimension $\dim{S}<k$, such that $R\setminus S$ is smooth of pure dimension $k$ and $\pi$ is an isomorphism over $R\setminus S$. Then, for every connected component $\widetilde{E}$ of $\widetilde{R}$, the set $\pi(\widetilde{E})\cup S$ is arc-symmetric in $\Omega$.
\end{lemma}

\begin{proof}
Let $\widetilde{E}$ be a connected component of $\widetilde{R}$, and let $\gamma:(-1,1)\to R$ be an analytic arc with $\Int(\gamma^{-1}(\pi(\widetilde{E})\cup S))\neq\varnothing$. If $\Int(\gamma^{-1}(S))\neq\varnothing$, then $\gamma((-1,1))\subset S$, by arc-symmetry of $S$. Otherwise, $\gamma$ intersects $S$ only at isolated points, and hence there is a unique analytic arc $\widetilde\gamma:(-1,1)\to\widetilde{R}$ such that $\gamma=\pi\circ\widetilde\gamma$. It now follows that $\Int(\widetilde\gamma^{-1}(\widetilde{E}))=\Int(\gamma^{-1}(\pi(\widetilde{E}))\neq\varnothing$, and hence $\widetilde\gamma((-1,1))\subset \tilde{E}$, by arc-symmetry of $\widetilde{E}$. Consequently, $\gamma((-1,1))\subset\pi(\widetilde{E})$, which completes the proof.
\end{proof}

\begin{proposition}
\label{prop:covers-reg-k}
Let $\Omega$ be a connected real analytic submanifold of $\R^n$.
Let $E\subset\Omega$ be an $\AR$-closed set of dimension $k>0$, and let $R\subset\Omega$ be its C-analytic closure. Assume that $\dim{R}=k$. Let $\pi:\widetilde{R}\to R$ be a desingularization of $R$. Then, there exist finitely many connected components $\widetilde{E}_1,\dots,\widetilde{E}_t$ of $\widetilde{R}$ of dimension $k$, such that
\[
\overline{\Reg_k(E)}\subset\pi(\widetilde{E}_1\cup\dots\cup\widetilde{E}_t)\subset E\,.
\]
\end{proposition}

\begin{proof}
Let $S\subset R$ be a C-analytic set of dimension $\dim{S}<k$, such that $R\setminus S$ is a $k$-dimensional real analytic manifold and $\pi$ is an isomorphism over $R\setminus S$. Let $\Gamma_1,\dots,\Gamma_s$ be the connected components of $\Reg_k(E)$ (finitely many, by global subanalyticity of $E$). For every $j=1,\dots,s$, $\Gamma_j\not\subset S$. As $\pi$ is an isomorphism over $R\setminus S$, there exists a connected component $\widetilde{E}_j$ of $\widetilde{R}$, of dimension $k$, such that $\pi(\widetilde{E}_j)$ contains a nonempty open subset of $\Gamma_j$. Then, $\Gamma_j\subset\pi(\widetilde{E}_j)\cup S$, by Lemmas~\ref{lem:dim-drop} and~\ref{lem:image-arc-sym}. Note that $\pi(\widetilde{E}_j)$ is a closed set, as $\pi$ is proper and $\widetilde{E}_j$ is a closed subset of $\widetilde{R}$. Since $\Gamma_j\cap S$ is nowhere dense in $\Gamma_j$, it follows that $\Gamma_j\subset\pi(\widetilde{E}_j)$.

On the other hand, the subanalytic arc-symmetric set $\pi^{-1}(E)$ contains a nonempty open subset of the connected manifold $\widetilde{E}_j$, and so $\widetilde{E}_j\subset\pi^{-1}(E)$ and $\pi(\widetilde{E}_j)\subset E$. It follows that
\[
\overline{\Reg_k(E)}\subset\overline{\pi(\widetilde{E}_1)\cup\dots\cup\pi(\widetilde{E}_s)}=\pi(\widetilde{E}_1\cup\dots\cup\widetilde{E}_s)\subset E\,,
\]
as required. (Note that there may be some repetitions among the $\widetilde{E}_1,\dots,\widetilde{E}_s$.)
\end{proof}

By noetherianity of $\AR_C$-topology, one can also define the \emph{$\AR_C$-closure} of an arbitrary set $S\subset\Omega$, denoted $\overline{S}^{\AR_C}$, as the smallest (with respect to inclusion) $\AR_C$-closed subset of $\Omega$ that contains $S$.

\begin{remark}
\label{rem:no-dim-preservation}
Unfortunately, in the subanalytic context, the $\AR$- and $\AR_C$-closure behaves in a much less controlled way than in the semialgebraic setting of \cite{Ku}. In particular, for an arbitrary globally subanalytic set $S$, one may have $\dim\overline{S}^{\AR}>\dim{S}$.
Indeed, consider, for example, $S=\{(x,y)\in\R^2\!:y=\sin{x},$ $-1\leq x\leq1\}$. Then, $S$ is globally subanalytic in $\R^2$ as a bounded subanalytic set, however by analytic continuation any arc-symmetric set in $\R^2$ containing $S$ must contain the whole graph of the sine function as well. Thus, $\overline{S}^\AR=\R^2$, hence also $\overline{S}^{\AR_C}=\R^2$.
Note that $S$ is also C-semianalytic, since $\sin{x}$ is a global real analytic function on $\R$.
\end{remark}

Nonetheless, the topological dimension (as a subanalytic set) of any $\AR_C$-closed set coincides with its $\AR_C$-Krull dimension, at least in the compact setting, as shown below.
For a non-empty $\AR_C$-closed set $E$ we define its \emph{Krull dimension} as
\[
\dim_K{E}=\sup\{l\in\N\;:\;\exists E_0\varsubsetneq E_1\varsubsetneq\dots\varsubsetneq E_l\subset E,\ \ \mathrm{where\ }E_j\mathrm{\ are\ }\AR_C\mathrm{-irreducible}\}\,.
\]
By convention, $\dim\varnothing=\dim_K\varnothing=-1$.

\begin{theorem}
\label{thm:AR-equals-Krull}
Let $\Omega$ be a compact connected real analytic submanifold of $\R^n$.
If $E$ is a non-empty $\AR_C$-closed set in $\Omega$, then
\[
\dim_K\!{E}\;=\;\dim{E}\,,
\]
where $\dim{E}$ is the supremum of dimensions of real analytic submanifolds of $E$.
\end{theorem}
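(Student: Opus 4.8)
The plan is to prove the two inequalities $\dim_K E \le \dim E$ and $\dim E \le \dim_K E$ separately, using the machinery built up in this section together with the global smoothing from Corollary~\ref{cor:global-smooth}. Throughout, I use that $\AR(\R^n)$ is the closed-set system of a noetherian topology (Theorem~\ref{thm:AR-topology}), so the usual dimension-theoretic formalism for chains of irreducible closed sets is available, and that $\dim$ here means topological/subanalytic dimension, which for subanalytic sets is well behaved (monotone under inclusion, stable under finite unions, drops on proper subanalytic subsets of a manifold, etc.).

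For the inequality $\dim_K E \le \dim E$: the key point is that a proper $\AR$-closed subset of an $\AR$-irreducible set has strictly smaller topological dimension. More precisely, I would first reduce to the case where $E$ itself is $\AR$-irreducible (since $\dim_K$ and $\dim$ of a finite union are the maxima over the $\AR$-irreducible components). Then, given a strict chain $E_0 \subsetneq E_1 \subsetneq \dots \subsetneq E_l \subseteq E$ of $\AR$-irreducible sets, I claim $\dim E_{j-1} < \dim E_j$ for each $j$. Indeed, set $d = \dim E_j$; by o-minimality $E_j$ contains a smooth connected subanalytic submanifold $\Gamma$ of dimension $d$ which is not contained in $E_{j-1}$ (if every such $\Gamma$ lay in $E_{j-1}$, then $E_{j-1}$ would contain the $d$-dimensional part of $E_j$, and one pushes this to get $E_j \subseteq E_{j-1}$ using $\AR$-irreducibility together with Lemma~\ref{lem:dim-drop}; I will need to argue this containment carefully, which is the one delicate spot on this side). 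Then Lemma~\ref{lem:dim-drop} gives $\dim(\Gamma \cap E_{j-1}) < d$; running this over a cover of $E_j$ by such $\Gamma$'s shows $\dim E_{j-1} < d = \dim E_j$. Hence $l \le \dim E_l \le \dim E$, giving $\dim_K E \le \dim E$.

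For the reverse inequality $\dim E \le \dim_K E$: I would exhibit a chain of $\AR$-irreducible sets of length $\dim E$. Write $d = \dim E$. By Corollary~\ref{cor:global-smooth} applied to $E$ (or to the $d$-dimensional union of $\AR$-irreducible components of $E$, which has the same dimension), after a composite of blowings-up $\sigma: M' \to \R^n$ that is an isomorphism outside a set of dimension $< d$, the preimage $S = \sigma^{-1}(E)$ contains a $d$-dimensional semianalytic piece sitting inside a smooth $d$-dimensional real analytic set $Z \subset M'$. Working inside an open chart where $Z$ is a $d$-dimensional analytic manifold, one finds a nested sequence of coordinate subspaces, or rather of real analytic submanifolds $W_0 \subsetneq W_1 \subsetneq \dots \subsetneq W_d$ with $\dim W_j = j$, all contained in $S$ near a smooth point (this uses that $S$ has nonempty interior in $Z$ at such a point). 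Each $W_j$ is globally subanalytic (being bounded subanalytic after applying $v_n$) and arc-symmetric as an analytic submanifold, hence lies in $\AR(\R^n)$; it is $\AR$-irreducible because a proper $\AR$-closed subset would, by Lemma~\ref{lem:dim-drop}, have dimension $< j$ and so cannot be a nonempty open subset of $W_j$, while a smooth connected submanifold cannot be a union of two proper closed subsets of strictly smaller dimension. Pushing the chain $W_0 \subsetneq \dots \subsetneq W_d$ forward by $\sigma$ — and here I use that $\sigma$ is an isomorphism over a neighbourhood of a generic smooth point of $E$, so the images remain $d+1$ distinct $\AR$-irreducible sets with images in $E$ — yields a chain in $E$ of length $d$, so $\dim_K E \ge d$.

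The main obstacle I anticipate is the containment argument in the first inequality: showing that if $E' \subsetneq E$ are $\AR$-irreducible and $\dim E' = \dim E = d$, one gets a contradiction. The natural route is: by o-minimality decompose the smooth $d$-dimensional locus of $E$ into connected smooth subanalytic pieces $\Gamma_\alpha$; if $E'$ meets some $\Gamma_\alpha$ in dimension $d$ then by Lemma~\ref{lem:dim-drop} (contrapositive) $\Gamma_\alpha \subseteq E'$, hence $\overline{\Gamma_\alpha}^{\AR} \subseteq E'$; but $\overline{\Gamma_\alpha}^{\AR}$ together with $\overline{\bigcup_{\beta\ne\alpha}\Gamma_\beta \cup (\text{lower-dim part})}^{\AR}$ realizes $E$ as a union of two $\AR$-closed sets, contradicting $\AR$-irreducibility of $E$ unless one of them is all of $E$ — and a dimension count shows it must be $\overline{\Gamma_\alpha}^{\AR}$, forcing $E \subseteq E'$. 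This needs the caveat from Remark~\ref{rem:no-dim-preservation} that $\AR$-closure can raise dimension, so one must track the dimension of the relevant pieces using the finiteness from Lemma~\ref{lem:fin-intersection} rather than naive dimension bookkeeping on $\AR$-closures; I expect this to be the part requiring the most care, while everything else follows the semialgebraic template of \cite{Ku} essentially verbatim.
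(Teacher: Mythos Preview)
Both halves of your plan have genuine gaps. For $\dim_K E \le \dim E$, the crux is that a proper $\AR$-closed subset of an $\AR$-irreducible set has strictly smaller topological dimension; this is Proposition~\ref{prop:dim-drops} in the paper. Your sketch for it --- write $E_j = \overline{\Gamma_\alpha}^{\AR} \cup \overline{(\text{rest})}^{\AR}$ and invoke irreducibility --- fails precisely because of the phenomenon you cite from Remark~\ref{rem:no-dim-preservation}: the $\AR$-closure of the ``rest'' may well be all of $E_j$, in which case irreducibility yields no contradiction and no conclusion about $\overline{\Gamma_\alpha}^{\AR}$. Lemma~\ref{lem:fin-intersection} controls intersections, not closures, so it does not rescue this. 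The paper proves Proposition~\ref{prop:dim-drops} by a different mechanism, namely through the global smoothing of Corollary~\ref{cor:global-smooth} and Proposition~\ref{prop:covers-all}: one produces a single connected component $\tilde Z$ of the smooth model with $\sigma(\tilde Z)\supset\overline{\Reg_d E}$, so that once $F$ contains one component of $\Reg_d E$ it already contains $\overline{\Reg_d E}$, and the remainder $\overline{E\setminus F}$ is shown to be $\AR$-closed directly --- no $\AR$-closures of lower-dimensional subanalytic pieces are ever taken.

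For $\dim E \le \dim_K E$, your local submanifolds $W_j$ built in a chart of $Z$ (and their images $\sigma(W_j)$) are simply not arc-symmetric: a bounded analytic $j$-disk in $\R^n$ is not $\AR$-closed, since an analytic arc lying in it extends analytically past its boundary. Replacing the $W_j$ by their $\AR$-closures reintroduces the Remark~\ref{rem:no-dim-preservation} problem --- there is no reason the closures remain strictly nested or of the right dimensions. The paper's argument here is much more elementary: the Good Directions Lemma provides a linear projection $\pi:\R^n\to U\cong\R^d$ with finite fibres on $E$, and intersecting $E$ with the pullback $\pi^{-1}(V)$ of a hyperplane $V\subset U$ yields an $\AR$-closed set of dimension $d-1$ (linear subspaces are real analytic, hence $\AR$-closed); picking a $(d-1)$-dimensional $\AR$-irreducible component and applying induction on $d$ builds the chain. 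In effect your two arguments are inverted relative to the paper's: you try to avoid global smoothing exactly where the paper needs it, and invoke it where a linear slice suffices.
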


\begin{proof}
By Proposition~\ref{prop:ARC-dim-drops} below, we have $\dim_K\!E\leq\dim{E}$.
For the proof of the other inequality, we proceed by induction on $k=\dim{E}$. The base case being clear, assume $k\geq1$.
By the Good Directions Lemma in o-minimal structures (see \cite[4.9]{vDDM} or \cite[Thm.\,VII.4.2]{vDD}), there is a $k$-dimensional linear subspace $U$ of $\R^n$ such that the orthogonal projection $\pi:\R^n\to U$ has finite fibres when restricted to $E$. Suppose $U$ is spanned by vectors $u_1,\dots,u_k$ in $\R^n$, and let $V=\mathrm{span}\{u_2,\dots,u_k\}$. Then, the set $F=E\cap\pi^{-1}(V)$ is $\AR_C$-closed as the intersection of two $\AR_C$-closed sets, and of dimension $k-1$. By the finiteness of decomposition into $\AR_C$-irreducible components, at least one such component of $F$ is of dimension $k-1$.
\end{proof}


\section{Compact submanifold setting}
\label{sec:rel-compact}

In this section, we assume that $\Omega$ is a compact connected real analytic submanifold of $\R^n$.
Then, real analytic subsets of $\Omega$ are globally subanalytic, and hence C-analytic subsets of $\Omega$ are $\AR_C$-closed.

\begin{proposition}
\label{prop:ARC-irred-C-irred}
Let $E\in\AR_C(\Omega)$ be $\AR_C$-irreducible. Then, there exists a unique C-irreducible C-analytic set $R\subset\Omega$, such that $E\subset R$, $\dim{R}=\dim{E}$, and $R\subset S$ for every C-analytic set $S\subset\Omega$ containing $E$.
\end{proposition}

\begin{proof}
Let $R$ be the C-analytic closure of $E$. By Proposition~\ref{prop:sub-C-analytic}, $R$ is of dimension $\dim{E}$.
Moreover, $R$ is C-irreducible, for if $R=R_1\cup R_2$ for some proper C-analytic subsets $R_1,R_2$, then $E=(E\cap R_1)\cup(E\cap R_2)$ is $\AR_C$-reducible.
\end{proof}

\begin{lemma}
\label{lem:ARC-closure-dim}
Let $E\subset\Omega$ be an $\AR_C$-closed set of dimension $k>0$, and let $E=\Gamma_1\cup\dots\cup\Gamma_s$ be its partition into smooth connected globally subanalytic sets. If $j\in\{1,\dots,s\}$ is such that $\Gamma_j\not\subset\overline{\Reg_k(E)}$, then $\dim\overline{\Gamma_j}^{\AR_C}=\dim\Gamma_j<k$.
\end{lemma}

\begin{proof}
Pick a point $x_0\in\Gamma_j\setminus\overline{\Reg_k(E)}$. Let $B$ be an open ball centered at $x_0$ such that $B\cap\overline{\Reg_k(E)}=\varnothing$. Since $E$ and $B$ are C-semianalytic, there exists a C-analytic $R\subset\Omega$, of dimension $\dim_{x_0}{\Gamma_j}$, such that $\Gamma_j\cap B\subset R$. Then, $\Gamma_j\subset R$, by Lemma~\ref{lem:dim-drop}, and so $\overline{\Gamma_j}^{\AR_C}\subset R$, as $R$ is $\AR_C$-closed. Consequently, $\dim\overline{\Gamma_j}^{\AR_C}\leq\dim{R}=\dim{\Gamma_j}<k$.
\end{proof}

\begin{corollary}
\label{cor:subset-of-irred}
Let $E\in\AR_C(\Omega)$ be $\AR_C$-irreducible, of dimension $k>0$. If $F\in\AR_C(\Omega)$ and $\Reg_k(E)\subset F\subset E$, then $F=E$.
\end{corollary}

\begin{proof}
If $F\supset\Reg_k(E)$, then $F\supset\overline{\Reg_k(E)}$, since arc-symmetric sets are closed in Euclidean topology. Let $E=\Gamma_1\cup\dots\cup\Gamma_s$ be a  finite partition of $E$ into smooth connected globally subanalytic sets. If $F\neq E$, there exists $j\in\{1,\dots,s\}$ such that $\Gamma_j\not\subset F$. Then, $\Gamma_j\not\subset\overline{\Reg_k(E)}$. Let $\Gamma_{j_1},\dots,\Gamma_{j_q}$ be all such strata. By Lemma~\ref{lem:ARC-closure-dim}, $\dim\overline{\Gamma_{j_1}\cup\dots\cup\Gamma_{j_q}}^{\AR_C}<k$, and hence $\overline{\Gamma_{j_1}\cup\dots\cup\Gamma_{j_q}}^{\AR_C}\neq E$. It follows that $E=F\cup\overline{\Gamma_{j_1}\cup\dots\cup\Gamma_{j_q}}^{\AR_C}$ is a decomposition of $E$ into nonempty proper $\AR_C$-closed subsets; a contradiction.
\end{proof}

In the compact setting, we can refine Proposition~\ref{prop:covers-reg-k} as follows. This is a C-semianalytic analog of \cite[Thm.\,2.6]{Ku}.

\begin{theorem}
\label{thm:smooth-covers-all}
Let $E\in\AR_C(\Omega)$ be an $\AR_C$-irreducible set of dimension $k>0$, and let $R\subset\Omega$ be its C-analytic closure. Let $\pi:\widetilde{R}\to R$ be a desingularization of $R$. Then, there exists a unique connected component $\widetilde{E}$ of $\widetilde{R}$ of dimension $k$, such that
\[
\overline{\Reg_k(E)}\subset\pi(\widetilde{E})\subset E\,.
\]
\end{theorem}

\begin{proof}
Let $S\subset R$ be a C-analytic set of dimension $\dim{S}<k$, such that $\pi$ is an isomorphism over $R\setminus S$, and $R\setminus S$ is a $k$-dimensional real analytic manifold. Let $\{C_\lambda\}_{\lambda\in\Lambda}$ be the locally finite family of its connected components. By \cite[Prop.\,5.3.5]{ABF}, each $C_\lambda$ is C-semianalytic.

Note that, by Lemma~\ref{lem:dim-drop}, for every $\lambda\in\Lambda$, $E\supset C_\lambda$ or else $E\cap C_\lambda$ is nowhere dense in $C_\lambda$ and $\dim_xE<k$ for every $x\in E\cap C_\lambda$. Since $\dim{E\cap S}<k$, $\Reg_k(E)$ contains a nonempty open subset of $R\setminus S$, and so there is at least one $\lambda$ such that $E\supset C_\lambda$. It follows that there exists a nonempty $\Lambda_E\subset\Lambda$ such that
\begin{equation}
\label{eq:R-S-decomp}
E\cap(R\setminus S)=\bigcup_{\lambda\in\Lambda_E}\!\!C_\lambda\cup(\bigcup_{\lambda\in\Lambda\setminus\Lambda_E}\!\!\!C_\lambda\cap E_{(<k)})\,,
\end{equation}
where $E_{(<k)}=\{x\in E:\dim_xE\leq k-1\}$. Moreover, by \cite[Prop.\,5.3.8]{ABF}, the set $E_{(<k)}$ and hence all the disjoint summands of the right hand side of~\eqref{eq:R-S-decomp} are C-semianalytic.

Let $\Gamma_1,\dots,\Gamma_s$ be the connected components of $\Reg_k(E)$. For every $j=1,\dots,s$, let $\Lambda_j=\{\lambda\in\Lambda: C_\lambda\subset\Gamma_j\}$. Then, \[
\Gamma_j=\bigcup_{\lambda\in\Lambda_j}C_\lambda\cup(\Gamma_j\cap S)\,,
\]
and as $\Gamma_j\cap S$ is nowhere dense in $\Gamma_j$, then
\[
\overline{\Gamma_j}=\overline{\bigcup_{\lambda\in\Lambda_j}C_\lambda}=\bigcup_{\lambda\in\Lambda_j}\overline{C_\lambda}\,.
\]
Moreover, $\Lambda_E=\Lambda_1\cup\dots\cup\Lambda_s$, and hence
\[
\overline{\Reg_k(E)}=\overline{\Gamma_1\cup\dots\cup\Gamma_s}=\bigcup_{\lambda\in\Lambda_E}\overline{C_\lambda}\,.
\]

Let $\widetilde{E}_\delta$ be a connected component of $\widetilde{R}$, of dimension $k$. Let $\lambda\in\Lambda$ be such that $\pi(\widetilde{E}_\delta)$ contains a nonempty open subset of $C_\lambda$. Then, $C_\lambda\subset\pi(\widetilde{E}_\delta)\cup S$, by Lemmas~\ref{lem:dim-drop} and~\ref{lem:image-arc-sym}. Since $C_\lambda\cap S=\varnothing$, we obtain $C_\lambda\subset\pi(\widetilde{E}_\delta)$. In fact, $\overline{C_\lambda}\subset\pi(\widetilde{E}_\delta)$, since $\pi(\widetilde{E}_\delta)$ is closed in $R$ as the image of a closed set by a proper mapping. On the other hand, if $\pi(\widetilde{E}_\delta)$ contains no open subset of $C_\lambda$, then $\rk_x\pi<k$ for all $x\in\widetilde{E}_\delta\cap\pi^{-1}(C_\lambda)$, whence $\pi(\widetilde{E}_\delta)\cap C_\lambda\subset S$ and so $\pi(\widetilde{E}_\delta)\cap C_\lambda=\varnothing$, as $C_\lambda\subset R\setminus S$.
It thus follows from~\eqref{eq:R-S-decomp} that $(\pi(\widetilde{E}_\delta)\cup S)\cap E=(\pi(\widetilde{E}_\delta)\cap E\cap(R\setminus S))\cup(E\cap S)$ is C-semianalytic, and hence $\AR_C$-closed.

Let now $\lambda_0\in\Lambda_E$ be arbitrary and let $\widetilde{E}_0$ be a connected component of $\widetilde{R}$ satisfying $\pi(\widetilde{E}_0)\supset\overline{C_{\lambda_0}}$. We claim that then $\pi(\widetilde{E}_0)\supset C_\lambda$ for all $\lambda\in\Lambda_E$. Indeed, for else, if $\Lambda_0=\{\lambda\in\Lambda_E:C_\lambda\subset\pi(\widetilde{E}_0)\}$ is a proper subset of $\Lambda_E$, then
\[
\bigcup_{\lambda\in\Lambda_E\setminus\Lambda_0}\!\!\!C_\lambda\subset\pi(\bigcup_{\delta\in\Delta}\widetilde{E}_\delta)\,,
\]
for some family $\{\widetilde{E}_\delta\}_{\delta\in\Delta}$ of components of $\widetilde{R}$ different from $\widetilde{E}_0$. As the family $\{\pi(\widetilde{E}_\delta)\cap E\cap(R\setminus S)\}_{\delta\in\Delta}$ is locally finite, the set $(\pi(\bigcup_{\delta\in\Delta}\widetilde{E}_\delta)\cup S)\cap E$ is C-semianalytic, and hence $\AR_C$-closed.
We thus get a decomposition of $E$ into proper $\AR_C$-closed subsets
\[
E=[(\pi(\widetilde{E}_0)\cup S)\cap E]\cup[(\pi(\bigcup_{\delta\in\Delta}\widetilde{E}_\delta)\cup S)\cap E]\,,
\]
contradicting the $\AR_C$-irreducibility of $E$.

On the other hand, the arc-symmetric set $\pi^{-1}(E)$ contains a nonempty open subset of the manifold $\widetilde{E}_0$, and so $\widetilde{E}_0\subset\pi^{-1}(E)$ and $\pi(\widetilde{E}_0)\subset E$, which completes the proof.
\end{proof}

\begin{remark}
\label{rem:even-better}
Let $E\in\AR_C(\Omega)$ be an $\AR_C$-irreducible set. 
Let $R$, $\pi:\widetilde{R}\to R$, and $S\subset R$ be as above. Note that, by Lemma~\ref{lem:ARC-closure-dim} and the above proof, one actually gets that for a certain (unique) connected component $\widetilde{E}$ of $\widetilde{R}$
\[
E=(\pi(\widetilde{E})\cup S)\cap E\,.
\]
\end{remark}

\begin{proposition}
\label{prop:ARC-dim-drops}
Let $E,F\in\AR_C(\Omega)$, $F\varsubsetneq E$, and suppose that $E$ is $\AR_C$-irreducible of dimension $k>0$. Then, $\dim{F}<\dim{E}$.
\end{proposition}

\begin{proof}
Suppose to the contrary that $\dim{F}=k$. Let $R\subset\Omega$ be the C-analytic closure of $E$ and let $\pi:\widetilde{R}\to R$ be its desingularization. By Theorem~\ref{thm:smooth-covers-all}, there is a unique connected component $\widetilde{E}$ of $\widetilde{R}$, such that $\pi(\widetilde{E})\supset\Reg_k(E)$. Then, the subanalytic arc-symmetric set $\pi^{-1}(F)$ contains a nonempty open subset of $\widetilde{E}$, and hence $\widetilde{E}\subset\pi^{-1}(F)$, by Lemma~\ref{lem:dim-drop}. Consequently, $\pi(\widetilde{E})\subset F$, hence $F\supset\Reg_k(E)$, and so $F=E$, by Corollary~\ref{cor:subset-of-irred}; a contradiction.
\end{proof}

\begin{proposition}
\label{prop:smooth-locus}
For every $E\in\AR_C(\Omega)$ of dimension $k>0$, there exists $F\in\AR_C(\Omega)$ such that $\dim(E\cap F)<k$ and $E\setminus F$ is a $k$-dimensional manifold.
\end{proposition}

\begin{proof}
Let $R$ be the C-analytic closure of $E$. Then, $R$ is of dimension $k$, and there is a C-analytic set $S\subset\Omega$, of dimension $\dim{S}<k$, such that $R\setminus S$ is smooth of pure dimension $k$. Set $F=E\cap S$.
\end{proof}

\begin{remark}
\label{rem:no-equality}
Note that, in general, one cannot expect that $\Reg_k{E}=E\setminus F$ for some $\AR_C$-closed set $F$. Indeed, this may not be true even if $E$ is real algebraic.
\end{remark}
\medskip


\section{Arc-symmetric sets are zero loci of arc-analytic functions}
\label{sec:zero-set-thm}

Let $E\subset\R^n$ be non-empty. Recall that a function $f:E\to\R$ is called \emph{arc-analytic}, when $f\circ\gamma$ is an analytic function for every analytic arc $\gamma:(-1,1)\to E$. It is called a \emph{globally subanalytic function}, when the graph $\Gamma_f$ of $f$ is a globally subanalytic set in $\R^{n+1}$.

It is well known that every globally subanalytic arc-analytic function is continuous in the Euclidean topology (see, e.g., \cite[Lem.\,6.8]{BM2}). Moreover, by a straightforward adaptation of \cite[Prop.\,5.1]{Ku}, one has the following.

\begin{remark}
\label{rem:graph-AR-closed}
Let $E\in\AR(\R^n)$ be non-empty, and let $f:E\to\R^m$ be a globally subanalytic function whose all components are arc-analytic. Then
\begin{itemize}
\item[(i)] $\Gamma_f\in\AR(\R^n\times\R^m)$.
\item[(ii)] If $Z\in\AR(\R^m)$, then $f^{-1}(Z)\in\AR(\R^n)$.
\end{itemize}
\end{remark}

We are now ready to prove our main result. This is a C-semianalytic analog of \cite[Thm.\,1]{AS1} and the proof below is a direct adaptation of our argument from \cite{AS1}.

\begin{theorem}
\label{thm:ARC-zero-set}
Let $\Omega$ be a compact connected real analytic submanifold in $\R^n$, and let $E\in\AR_C(\Omega)$.
There exists a globally subanalytic arc-analytic function $f:\Omega\to\R$, such that $E=f^{-1}(0)$.
\end{theorem}

\begin{proof}
We argue by induction on dimension of $E$. If $\dim{E}\leq0$, then $E$ is a finite set, and hence the zero locus of a polynomial function on $\R^n$ restricted to $\Omega$. Suppose then that $\dim{E}=k>0$, and every $\AR_C$-closed set in $\Omega$ of dimension less than $k$ is the zero locus of a globally subanalytic arc-analytic function on $\Omega$. Assume without loss of generality that $E$ is $\AR_C$-irreducible.

Let $R\subset\Omega$ be the C-analytic closure of $E$. Then, $R$ is $k$-dimensional and C-irreducible, by Proposition~\ref{prop:ARC-irred-C-irred}.
Let $\pi:\widetilde{\Omega}\to\Omega$ be an embedded desingularization of $R$, and let $\widetilde{R}$ denote the strict transform of $R$ by $\pi$. Let $S\subset R$ be a C-analytic set, of dimension $\dim{S}<k$, such that $R\setminus S$ is smooth of pure dimension $k$, and $\pi$ is an isomorphism over $\Omega\setminus S$. Since $E\cap S$ is an $\AR_C$-closed set of dimension strictly less than $k$, the inductive hypothesis implies that there is a globally subanalytic arc-analytic function $h:\Omega\to\R$ such that $E\cap S=h^{-1}(0)$.

By Theorem~\ref{thm:smooth-covers-all}, there is a unique connected component $\widetilde{E}$ of $\widetilde{R}$ such that $\Reg_k(E)\subset\pi(\widetilde{E})\subset E$. Let $D=\pi^{-1}(S)$ and $Z=\widetilde{E}\cap D$. Let $\sigma:\widehat\Omega\to\widetilde\Omega$ be the blowing-up of $\widetilde\Omega$ at the C-analytic set $Z$. Let $\widehat{E}$ and $\widehat{D}$ denote the strict transforms of $\widetilde{E}$ and $D$ by $\sigma$, respectively. Since $\widetilde{E}$ and $D$ have only normal crossings (cf. \cite[Thm.1.6]{BM3}), $\widehat{E}$ and $\widehat{D}$ are disjoint subsets of $\widehat\Omega$. The sets $\widetilde{E}$ and $D$ are both C-analytic, and hence so are $\widehat{E}$ and $\widehat{D}$. We may thus choose non-negative analytic functions $v_1,v_2\in\An(\widehat\Omega)$, such that $v_1^{-1}(0)=\widehat{E}$ and $v_2^{-1}(0)=\widehat{D}$. Then, $v_1+v_2>0$ on $\widehat\Omega$ as $\widehat{E}\cap\widehat{D}=\varnothing$, and so $\vp\coloneq v_1/(v_1+v_2)$ defines an analytic function on $\widehat\Omega$. Note that $\vp\geq0$ on $\widehat\Omega$, $\vp|_{\widehat{E}}\equiv0$, and $\vp|_{\widehat{D}}\equiv1$.
Finally, set $v\coloneq v_1\!\cdot\!v_2$. Then, $v\in\An(\widehat\Omega)$, $v^{-1}(0)=\widehat{E}\cup\widehat{D}$, and $v\geq0$ on $\widehat\Omega$.

Now, define $\widehat{f}:\widehat\Omega\to\R$ by the formula
\[
\widehat{f}\coloneq(\vp\cdot(h\circ\pi\circ\sigma))^2+v^2\,.
\]
Note that $\widehat{f}$ is an arc-analytic function on $\widehat\Omega$, $\widehat{f}=(h\circ\pi\circ\sigma)^2$ on $\widehat{D}$, $\widehat{f}=0$ on $\widehat{E}$, and $\widehat{f}$ is strictly positive outside $\widehat{E}\cup\widehat{D}$.

Next, we compose $\widehat{f}$ with $\sigma^{-1}$ in order to get an arc-analytic function on $\widetilde\Omega$. More precisely, define $\widetilde{f}:\widetilde\Omega\to\R$ as
\[
\widetilde{f}(y)\coloneqq\begin{cases}((\widehat{f}\circ\sigma^{-1})\cdot(h\circ\pi))(y), &y\notin Z\\ 0, &y\in Z\,.\end{cases}
\]
To see that $\widetilde{f}$ is arc-analytic, let $\widetilde{\gamma}:(-1,1)\to\widetilde\Omega$ be an analytic arc not contained in $Z$, and let $\widehat\gamma:(-1,1)\to\widehat\Omega$ be its lifting by $\sigma$. Then, $\sigma\circ\widehat\gamma=\widetilde\gamma$. We claim that 
\begin{equation}
\label{eq:Gogamma}
\widetilde{f}\circ\widetilde{\gamma} = (\widehat{f}\circ\widehat\gamma) \cdot (h \circ \pi \circ \widetilde{\gamma})\,,
\end{equation}
which implies that $\widetilde{f}\circ \widetilde{\gamma}$ is analytic. Indeed, if $\widetilde{\gamma}(t) \notin Z$, then \eqref{eq:Gogamma} holds because $(\widehat{f}\circ\sigma^{-1}\circ\widetilde\gamma)(t)=(\widehat{f}\circ\sigma^{-1}\circ\sigma\circ\widehat\gamma)(t)=(\widehat{f}\circ\widehat\gamma)(t)$. If, in turn, $\widetilde{\gamma}(t) \in Z$, then $(h \circ \pi \circ \widetilde{\gamma})(t)=0$, by definition of $h$, and hence both sides of \eqref{eq:Gogamma} are equal to zero.

Now, we compose $\widetilde{f}$ with $\pi^{-1}$ to get an arc analytic function on $\Omega$. More precisely, we define $f:\Omega\to\R$ as
\[
f(x) \coloneqq\begin{cases}(\widetilde{f}\circ\pi^{-1})(x), &x\notin S\\ h^3(x), &x \in S\,.\end{cases}
\]
To see that $f$ is arc-analytic, let $\gamma:(-1,1)\to\Omega$ be an analytic arc not contained in $S$. Let $\widetilde\gamma:(-1,1)\to\widetilde\Omega$ be the lifting of $\gamma$ by $\pi$, and let $\widehat\gamma:(-1,1)\to\widehat\Omega$ be the lifting of $\widetilde\gamma$ by $\sigma$. Then, $\pi\circ \widetilde\gamma=\gamma$, and $\sigma\circ\widehat\gamma=\widetilde\gamma$. We claim that 
\begin{equation}
\label{eq:fogamma}
f\circ\gamma = \widetilde{f}\circ\widetilde\gamma\,,
\end{equation}
which implies that $f\circ\gamma$ is analytic. Indeed, if $\gamma(t) \not\in S$, then \eqref{eq:fogamma} holds because 
$(\widetilde{f}\circ\pi^{-1}\circ\gamma)(t)=(\widetilde{f}\circ\pi^{-1}\circ\pi\circ\widetilde\gamma)(t)=(\widetilde{f}\circ\widetilde\gamma)(t)$. If, in turn, $\gamma(t) \in S \cap \pi(\widetilde{E})$, then $h(\gamma(t))=0$ and hence $(f\circ\gamma)(t)=0$. But $\widetilde\gamma(t) \in Z$, and hence $(\widetilde{f}\circ\widetilde\gamma)(t)=0$ as well. Finally, if $\gamma(t)\in S\setminus\pi(\widetilde{E})$, then $\widetilde\gamma(t)\notin Z$ and $\widehat\gamma(t)\in\widehat{D}$; hence, by \eqref{eq:Gogamma}, we have
\begin{multline}
\notag
(\widetilde{f}\circ\widetilde\gamma)(t) = ((\widehat{f}\circ\widehat\gamma)\cdot(h\circ\pi\circ\widetilde\gamma))(t)
= \left(((h\circ\pi\circ\sigma)^2\circ\widehat\gamma)\cdot(h\circ\pi\circ\widetilde\gamma)\right)(t)\\
= \left((h\circ\pi\circ\widetilde\gamma)^2\cdot(h\circ\pi\circ\widetilde\gamma)\right)(t)
= (h\circ\pi\circ\widetilde\gamma)^3(t) = (h\circ\gamma)^3(t) = (f\circ\gamma)(t)\,.
\end{multline}
We shall now calculate the zero locus of $f$.
\begin{align*}
f^{-1}(0) & = \{x \in \Omega\setminus S : (\widetilde{f}\circ\pi^{-1})(x)=0 \} \cup \{x \in S : h^3(x)=0 \}\\
& = \pi\left(\{y \in \widetilde\Omega\setminus D : \widetilde{f}(y)=0\}\right) \cup (E \cap S)\\
& = \pi\left(\{y \in \widetilde\Omega\setminus D : ((\widehat{f}\circ\sigma^{-1})\cdot(h\circ\pi))(y)=0\}\right) \cup (E \cap S)\\
& = \pi\left(\{y \in \widetilde\Omega\setminus D : (\widehat{f}\circ\sigma^{-1})(y)=0\}\right) \cup (E \cap S)\\
& = \left((\pi\circ\sigma)(\{z\in\widehat\Omega\setminus\sigma^{-1}(D) :  \widehat{f}(z)=0 \})\right) \cup (E \cap S)\\
& = (\pi\circ\sigma)(\widehat{E}\setminus\sigma^{-1}(D)) \cup (E \cap S)\\
& = \pi(\widetilde{E}\setminus D) \cup (E \cap S) = \pi(\widetilde{E}) \cup (E \cap S) = (\pi(\widetilde{E})\cup S)\cap E\,.
\end{align*}
The latter set, by Remark~\ref{rem:even-better}, is equal to $E$, which completes the proof.
\end{proof}

\begin{corollary}
\label{cor:ARC-on-cubes}
Let $U$ be a non-empty open set in $\R^n$ and let $E\in\AR_C(U)$. Let $I_1,\dots,I_n\subset\R$ be closed intervals such that the cube $C=I_1\times\dots\times I_n$ is a subset of $U$. Then, there is a continuous globally subanalytic function $f:C\to\R$, which is arc-analytic on the interior of $C$ and such that $f^{-1}(0)=(E\cup\mathrm{fr}(C))\cap C$.
\end{corollary}

\begin{proof}
For simplicity of notation, assume without loss of generality that $C=[-1,1]^n$. Let $S^1\subset\R^2$ denote the unit circle. The projection $S^1\ni(x,y)\mapsto x\in[-1,1]$ induces a real analytic mapping $p:T^n\to C$ from the $n$-torus $T^n=(S^1)^n$ onto $C$. Let $D\subset T^n$ denote the inverse image of the boundary of $C$. Then, $p|_{T^n\setminus D}$ is a $2^n$-sheeted analytic covering of the open cube $(-1,1)^n$.

Set $F\coloneqq p^{-1}(E\cap C)$. Then, $F$ is an $\AR_C$-closed subset of the compact manifold $T^n$, and hence so is $F\cup D$. By Theorem~\ref{thm:ARC-zero-set}, there is a globally subanalytic arc-analytic function $g:T^n\to\R$ with $g^{-1}(0)=F\cup D$. Let $V\subset T^n$ be one of the connected components of $p^{-1}((-1,1)^n)$. Then, the function $f\coloneqq g\circ(p|_{\overline{V}})^{-1}$ has the required properties.
\end{proof}


\section{Nash globally subanalytic arc-symmetric sets}
\label{sec:Nash-glob-sub}

Let $\Omega$ be a real analytic manifold. Let $E\subset\Omega$ be a subanalytic set, and let $x\in\Omega$. Suppose first that $E$ is of pure dimension $k$. We say that $E$ is \emph{Nash at $x$}, when there exists a neighbourhood $U$ of $x$ in $\Omega$ and an analytic set $S\subset U$, of dimension $k$, such that $E\cap U\subset S$. Suppose now that $E$ is not pure-dimensional. We say that $E$ is Nash at $x$, when $E$ is a finite union of pure-dimensional subanalytic sets each of which is Nash at $x$. We say that $E$ is a \emph{Nash subanalytic} set in $\Omega$, when $E$ is Nash at each point of $\Omega$.

\begin{theorem}
\label{thm:Nash-is-semi}
Let $\Omega$ be a connected real analytic submanifold of $\R^n$, and let $E\in\AR(\Omega)$.
If $E$ is Nash subanalytic, then $E$ is semianalytic in $\Omega$.
\end{theorem}

\begin{proof}
We proceed by induction on dimension of $E$. If $\dim{E}\leq0$, then $E$ is a finite set and hence the zero locus of a polynomial function on $\R^n$ (restricted to $\Omega$). Suppose then that $\dim{E}=k>0$, and that every Nash globally subanalytic arc-symmetric subset of a connected real analytic submanifold $\Theta$ of $\R^n$ of dimension less than $k$ is semianalytic in $\Theta$.

Since $E$ is closed in $\Omega$, it suffices to show that $E$ is semianalytic in a sufficiently small neighbourhood of each of its points.
Pick an arbitrary $\xi\in E$. Without loss of generality, we may assume that $\dim_\xi E=k$. For $j=0,\dots,k$, let $E^{(j)}=\{x\in E:\dim_x{E}=j\}$, and let $E^*=\bigcup_{j=0}^{k-1}E^{(j)}$. By Nashness of $E$, one can choose an open ball $U$ centered at $\xi$, such that there exist globally subanalytic real analytic sets $R$ and $T$ in $U$, with the following properties: $\dim{R}=k$, $E\cap U\subset R$, $\dim{T}<k$, $E^*\cap U\subset T$. Moreover, after shrinking $U$ if needed, there is a globally subanalytic real analytic set $S\subset R$, of dimension $\dim{S}<k$, such that $R\setminus S$ is smooth of pure dimension $k$.

Let now $\{S_\lambda\}_{\lambda\in\Lambda}$ be a semianalytic stratification of $U$, compatible with $R$, $S$, and $T$. Assume $\Lambda$ is finite, after further shrinking $U$ if needed. Let $\Lambda_R=\{\lambda\in\Lambda:S_\lambda\subset R\}$. Note that, for every $\lambda\in\Lambda_R$, we have $S_\lambda\subset E$, or else $E\cap S_\lambda$ is nowhere dense in $S_\lambda$ and $\dim_xE<k$ for all $x\in E\cap S_\lambda$, by Lemma~\ref{lem:dim-drop}. Let $\Lambda_E=\{\lambda\in\Lambda_R:S_\lambda\subset E\}$. We thus obtain
\begin{multline}
\label{eq:complement}
\overline{R\setminus E}=\overline{(\bigcup_{\lambda\in\Lambda_R}S_\lambda)\setminus E}=\overline{\bigcup_{\lambda\in\Lambda_R}(S_\lambda\setminus E)}\\
=\overline{\bigcup_{\lambda\in\Lambda_R\setminus\Lambda_E}\!\!\!(S_\lambda\setminus E)}=\bigcup_{\lambda\in\Lambda_R\setminus\Lambda_E}\!\!\!\overline{S_\lambda\setminus E}=\bigcup_{\lambda\in\Lambda_R\setminus\Lambda_E}\!\!\!\overline{S_\lambda}\,.
\end{multline}
By a well known characterization of semianalyticity (see, e.g., \cite[Thm.\,2.13]{BM1}), to prove that $E\cap U$ is semianalytic it suffices to show that $(E\cap U)\setminus\Int_R(E)$ is semianalytic of dimension less than $k$, where $\Int_R(E)=E\setminus\overline{R\setminus E}$ is the interior of $E$ in $R$.
It follows from~\eqref{eq:complement} that
\[
(E\cap U)\setminus\Int_R(E)=E\cap\overline{R\setminus E}=E\,\cap\!\!\!\bigcup_{\lambda\in\Lambda_R\setminus\Lambda_E}\!\!\!\overline{S_\lambda}=E\cap(S\cup T)\,.
\]
The latter is a Nash $\AR$-closed subset of $U$ of dimension less than $k$, and hence semianalytic by inductive hypothesis.
\end{proof}

For the next result, we shall adapt the concept of a \emph{$q$-grid} from \cite{BP}. Given a positive integer $q$, a $q$-grid centered at $\xi=(\xi_1,\dots,\xi_n)\in\R^n$ is defined as the union of coordinate hyperplanes
\[
\Sigma=\bigcup_{j=1}^n\,\bigcup_{k\in\Z}\{x_j=\xi_j+\;k/q\}\,.
\]
Let $\{C_\lambda\}_{\lambda\in\Lambda}$ denote the family of open cubes induced by $\Sigma$ (i.e., the connected components of $\R^n\setminus\Sigma$).
We say that $\Sigma$ is \emph{subordinate} to an open cover $\mathcal{U}=\{U_\iota\}_{\iota\in I}$ of $\R^n$, when for every $\lambda\in\Lambda$ there exists $\iota\in I$ with $\overline{C}_\lambda\subset U_\iota$. Given a subanalytic set $E\subset\R^n$, we say that $\Sigma$ is \emph{in general position} with respect to $E$, when $\dim_x\!{\Sigma\cap E}<\dim_x\!{E}$ for every $x\in E$.

\begin{remark}
\label{rem:q-grid}
Let $\Omega$ be a relatively compact open set in $\R^n$.
It is evident from the proof of \cite[Lem.\,2.4]{BP} that for every locally finite open cover $\mathcal{U}$ of $\R^n$ and for every closed globally subanalytic set $E\subset\R^n$ there exists a positive integer $q$ and a point $\xi\in\R^n$ such that, up to a linear coordinate change in $\R^n$, the $q$-grid $\Sigma$ centered at $\xi$ is subordinate to $\mathcal{U}$ on $\Omega$ and in general position with respect to $E$.
\end{remark}

\begin{theorem}
\label{thm:AR-zero-set}
Let $E$ be a Nash globally subanalytic arc-symmetric set in $\R^n$. Then, for every relatively compact open $\Omega\subset\R^n$, there exists a continuous globally subanalytic function $f:\Omega\to\R$ and a simple normal crossings divisor $\Sigma$ in $\R^n$, such that
\begin{itemize}
\item[(i)] $\dim_x\Sigma\cap E<\dim_xE$ for all $x\in E$
\item[(ii)] $f$ is arc-analytic on $\Omega\setminus\Sigma$, and
\item[(iii)] $E\cap\Omega=f^{-1}(0)$.
\end{itemize}
\end{theorem}

\begin{proof}
By Theorem~\ref{thm:Nash-is-semi}, for every point $x\in\R^n$, there exists a relatively compact connected open neighbourhood $U^x$ of $x$ in $\R^n$ such that $E\cap U^x\in\AR_C(U^x)$. Let $\mathcal{U}=\{U_\iota\}_{\iota\in I}$ be a locally finite subcover of $\R^n$ chosen from the open cover $\{U^x\}_{x\in E}$. Let $\Omega$ be an arbitrary relatively compact nonempty open set in $\R^n$.
By Remark~\ref{rem:q-grid}, there is a $q$-grid $\Sigma'$ in $\R^n$ such that $\Sigma'$ is in general position with respect to $E$ and the covering $\mathcal{C}=\{\overline{C}_\lambda\}_{\lambda\in\Lambda}$ of $\Omega$ by the closed cubes induced by $\Sigma'$ is subordinate to $\mathcal{U}$.

Given $\lambda\in\Lambda$, pick $\iota\in I$ such that $\overline{C}_\lambda\subset U_\iota$. By Corollary~\ref{cor:ARC-on-cubes}, there is a continuous globally subanalytic function $f_\lambda:\overline{C}_\lambda\to\R$ such that $f_\lambda$ is arc-analytic on $C_\lambda$ and $f_\lambda^{-1}(0)=(E\cup\Sigma')\cap\overline{C}_\lambda$. Define $f_{\Sigma'}:\Omega\to\R$ as $f_{\Sigma'}\coloneq\bigcup_{\lambda\in\Lambda}f_\lambda|_{\overline{C}_\lambda}$. This $f_{\Sigma'}$ is continuous, globally subanalytic, arc-analytic outside $\Sigma'$, and $f^{-1}_{\Sigma'}(0)=E\cup\Sigma'$.

To complete the proof, note that the $q$-grid $\Sigma'$ can be chosen so that a $q$-grid $\Sigma'_l\coloneq\{(x_1+\frac{l}{kq},\dots,x_n+\frac{l}{kq}):(x_1,\dots,x_n)\in\Sigma'\}$ is also subordinate to $\mathcal{U}$ on $\Omega$ and in general position with respect to $E$, for some $k\in\Z_+$ and all $l\in\{1,\dots,n\}$. Let $f_{\Sigma'_l}:\Omega\to\R$ be the corresponding continuous  globally subanalytic functions with $f^{-1}_{\Sigma'_l}(0)=E\cup\Sigma'_l$ constructed as above. Then, the function $f\coloneq f_{\Sigma'}+f_{\Sigma'_1}+\dots+f_{\Sigma'_n}$ is continuous, globally subanalytic, and arc-analytic outside the simple normal crossings divisor
\[
\Sigma\coloneq\{(x_1+\frac{l_1}{kq},\dots,x_n+\frac{l_n}{kq}):(x_1,\dots,x_n)\in\Sigma',\, l_1,\dots,l_n\in\{0,\dots,n\}\}\,.
\]
Moreover, $f^{-1}(0)=E$.
\end{proof}

\bibliographystyle{amsplain}

\end{document}